\documentclass[11pt]{amsart}
\usepackage{amsmath,amssymb,mathtools}
\usepackage{hyperref}
\usepackage[margin=1.1in]{geometry}

\newtheorem{theorem}{Theorem}[section]
\newtheorem{proposition}[theorem]{Proposition}
\newtheorem{lemma}[theorem]{Lemma}
\newtheorem{corollary}[theorem]{Corollary}
\theoremstyle{remark}
\newtheorem{remark}[theorem]{Remark}

\newcommand{\C}{\mathbf C}
\newcommand{\Hh}{\mathfrak H}
\newcommand{\SLtwoZ}{\mathrm{SL}_2(\mathbf Z)}
\newcommand{\Tr}{\operatorname{Tr}}
\newcommand{\Span}{\operatorname{Span}}
\newcommand{\RePart}{\operatorname{Re}}
\newcommand{\one}{\mathbf 1}

\title[Simultaneous non-vanishing]{Simultaneous nonvanishing of quadratic twists
via Rankin--Cohen brackets}

\author{Ramin Takloo-Bighash}

\subjclass[2020]{Primary 11F67; Secondary 11F11}
\keywords{Rankin--Cohen brackets, Eisenstein series, central values,
quadratic twists, linear independence}

\begin{document}

\begin{abstract}
Let $D$ be an odd fundamental discriminant, with $D=1$ permitted, and let
$r\geq 1$ be fixed.  We prove that, for every sufficiently large integer
$\ell$ satisfying $(-1)^\ell D>0$, the first $r$ traced diagonal
Rankin--Cohen brackets
\[
 \Tr_1^{|D|}[G_{\ell-2e,D},G_{\ell-2e,D}]_{2e},
 \qquad 1\leq e\leq r,
\]
are linearly independent in $S_{2\ell}(\SLtwoZ)$.  Here $G_{k,D}$ is the
Eisenstein series of weight $k$, level $|D|$, and nebentypus $\chi_D$.
The Petersson formula of Kayath, Lane, Neifeld, Ni, and Xue then implies
that at least $r$ normalized Hecke eigenforms
$f\in S_{2\ell}(\SLtwoZ)$ satisfy
$L(f\otimes\chi_D,\ell)\neq 0$.  For $D=1$, this gives, for every fixed
$r$ and every sufficiently large $K\equiv 0\pmod 4$, at least $r$
level-one eigenforms of weight $K$ with nonzero central value. 
\end{abstract}

\maketitle

\section{Introduction}

Rankin--Cohen brackets provide a natural way to construct cusp forms from
modular forms of lower weight.  They originate in work of Rankin and Cohen
and were placed in a broad structural framework by Zagier; see
\cite{Rankin,Cohen,ZagierDifferential}. 

Lanphier and Takloo-Bighash \cite[Cor. 3.2]{LanphierTaklooBighash} used the fact that $[E_{K/2-2},E_{K/2-2}]_2$ is not an eigenform for  $K>20$ with $K\equiv 0\pmod 4$ to prove the existence of at least two eigenforms
$f\in S_K(\SLtwoZ)$ with $L(f,K/2)\neq 0$.
 The purpose of this note is to generalize this result. 

In this work we will use a construction by Kayath, Lane, Neifeld, Ni, and Xue
\cite{KayathEtAl}.  For an odd fundamental discriminant $D$ and integers
$k\geq 4$, $e>0$, with $\ell=k+2e$ and $(-1)^\ell D>0$, they introduced
the level-one cusp form
\begin{equation}\label{eq:intro-kernel}
 \mathcal F_{D,k,e}:=\Tr_1^{|D|}[G_{k,D},G_{k,D}]_{2e}
 \in S_{2\ell}(\SLtwoZ).
\end{equation}
See \S \ref{sec:preliminaries} for the notation used here and also the result we will need from \cite{KayathEtAl}.

As usual, $\chi_D(n)=\left(\frac Dn\right)$ denotes the primitive quadratic
character attached to $D$.  We follow \cite{KayathEtAl} in including
$D=1$ among the fundamental discriminants, with $\chi_1$ the trivial
character.

\begin{theorem}\label{thm:main}
Fix an odd fundamental discriminant $D$ and an integer $r\geq 1$.  There is
an effective integer $\ell_0=\ell_0(D,r)$ such that, whenever
\[
 \ell\geq \ell_0 \qquad\text{and}\qquad (-1)^\ell D>0,
\]
the $r$ cusp forms
\begin{equation}\label{eq:initial-family}
 \mathcal F_{\ell,e}^{(D)}:=\mathcal F_{D,\ell-2e,e}
 =\Tr_1^{|D|}[G_{\ell-2e,D},G_{\ell-2e,D}]_{2e},
 \qquad 1\leq e\leq r,
\end{equation}
are linearly independent in $S_{2\ell}(\SLtwoZ)$.

Let $\mathcal B_{2\ell}$ denote the normalized Hecke eigenbasis of
$S_{2\ell}(\SLtwoZ)$.  Then
\begin{equation}\label{eq:nonvanishing-count}
 \#\{f\in\mathcal B_{2\ell}:L(f\otimes\chi_D,\ell)\neq 0\}\geq r.
\end{equation}
\end{theorem}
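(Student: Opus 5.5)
The argument has two parts: linear independence, and then the nonvanishing count via the Petersson formula of \cite{KayathEtAl}. The second part is formal once the first is known, so I would treat it first.

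\emph{From independence to nonvanishing.} The result of \cite{KayathEtAl} recalled in \S\ref{sec:preliminaries} expands $\mathcal F_{D,k,e}$ in the eigenbasis as
\[
\mathcal F_{D,k,e}=\sum_{f\in\mathcal B_{2\ell}} c_{k,e}(f)\,L(f,2\ell-1)\,L(f\otimes\chi_D,\ell)\,\frac{f}{\langle f,f\rangle},
\]
up to explicit nonzero constants, possibly with a second $L$-value attached to $f$. The essential feature is that the coefficient of $f$ contains the factor $L(f\otimes\chi_D,\ell)$. Consequently every $\mathcal F^{(D)}_{\ell,e}$ lies in the span $V$ of those $f\in\mathcal B_{2\ell}$ with $L(f\otimes\chi_D,\ell)\neq0$. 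If the $r$ forms are independent, then $\dim V\ge r$, and this is exactly \eqref{eq:nonvanishing-count}.

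\emph{Linear independence.} I would compare Fourier coefficients. Let $A$ be the $r\times r$ matrix whose entry $A_{e,n}$ is the $n$-th Fourier coefficient of $\mathcal F^{(D)}_{\ell,e}$, for $1\le e\le r$ and $1\le n\le r$. Independence follows once $\det A\neq 0$.

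The Eisenstein series $G_{k,D}$ has a constant term $a_0$ at the cusp at infinity and coefficients given by twisted divisor sums. The trace to level one adds the contributions of the other cusps. For $\chi_D$ with $|D|$ odd, these contributions are $G_{k,D}$ slashed by Atkin--Lehner-type matrices, and they produce the companion Eisenstein series, whose constant term vanishes or is smaller in the relevant normalization.

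The bracket is
\[
[g,g]_{2e}=\sum_{i+j=2e}(-1)^i\binom{2e+k-1}{j}\binom{2e+k-1}{i}\,g^{(i)}g^{(j)}.
\]
The dominant part of its $n$-th coefficient comes from pairing the constant term of one factor with the $n$-th coefficient of the derivative of the other. This gives
\[
A_{e,n}\approx 2a_0\,c_k(n)\,n^{2e}\binom{2e+k-1}{2e},
\]
where $c_k(n)=\sigma_{k-1,\chi_D}(n)$ and $k=\ell-2e$.

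The remaining terms, namely products $a_m a_{n-m}$ with $1\le m<n$, are small as $\ell\to\infty$. To see this, normalize $G$ so that $a_0=1$; then $a_m=O(1/L(1-k,\chi_D))$, which decays superexponentially in $k$, because $L(1-k,\chi_D)$ grows like $k!\,(k/2\pi e)^{\cdots}$. With this normalization, $A$ is approximately $\mathrm{diag}(\text{nonzero})\cdot(n^{2e})_{e,n}\cdot\mathrm{diag}(c(n))$. There is a catch: $k=\ell-2e$ depends on $e$. However, $c_{\ell-2e}(n)$ and $L(1-\ell+2e,\chi_D)$ vary in a controlled way with $e$, and these factors can be absorbed into the row and column scalings. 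What remains is a Vandermonde-type matrix $(n^{2e})$, which has nonzero determinant. The smallness of the error terms then gives an effective $\ell_0(D,r)$ beyond which $\det A\neq0$.

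\emph{Main obstacle.} The hard step is controlling the trace contributions from the other cusps of $\Gamma_0(|D|)$ uniformly in $e\le r$. These could a priori be of the same size as the main term, since the traced form involves $G|W$, whose constant term involves Gauss sums and powers of $|D|^{-k/2}$. I would first try to show that these contributions carry a factor $|D|^{-\ell+O(r)}$ and therefore lie in the error term. If that fails, I would instead show that the cusp contributions simply multiply row $e$ by a nonzero factor, in which case they are absorbed into the row scaling.
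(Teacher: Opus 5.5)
Your proof has a genuine gap at its central step, the reduction to a Vandermonde determinant. Up to a nonzero factor depending only on the row, the main term of your $(e,n)$ entry is $n^{2e}c_{\ell-2e}(n)=\sum_{a\mid n}\chi_D(a)\,a^{\ell-1}(n/a)^{2e}$. For $n$ prime to $D$, its dominant piece is the $a=n$ term, $\chi_D(n)n^{\ell-1}$, and this does not depend on $e$: the factor $n^{-2e}$ inside $c_{\ell-2e}(n)$ cancels your Vandermonde factor $n^{2e}$ exactly.

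So the $e$-dependence of $c_{\ell-2e}(n)$ cannot be absorbed into row and column scalings. To leading order, every column indexed by an $n$ prime to $D$ is a multiple of $\one$; this includes $n=1,2$, since $D$ is odd. The matrix is therefore degenerate at leading order, and any argument that tracks only the sizes of the main terms sees determinant zero. Nonvanishing has to come from the subleading divisor terms.

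This is what the paper's Lemma~\ref{lem:determinant} handles. With primes $p_1<\cdots<p_r$ not dividing $D$, column $j$ of the normalized matrix is $\chi_D(p_j)p_j^{\ell-1}\one+w_j+o(1)$, where $w_j=(p_j^{2e})_e$. Multilinearity kills every term containing two copies of $\one$, and then $p_r^{1-\ell}\det M_\ell\to\chi_D(p_r)\det(w_1,\ldots,w_{r-1},\one)\neq0$.

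Your indices $n=1,\ldots,r$ can be rescued, but only through a different identity. The main-term matrix factors as $VT$, with $V=(d^{2e})_{e,d}$ and $T_{d,n}=\chi_D(n/d)(n/d)^{\ell-1}$ for $d\mid n$ and $0$ otherwise. $T$ is unipotent upper triangular, so the main determinant is exactly $\det V\neq0$. Its entries are then as large as $r^{\ell-1}$ while the determinant is $O(1)$. Mere smallness of the errors is therefore not enough: you need their decay $O(Q^k/\Gamma(k))$ from Proposition~\ref{prop:endpoint} to beat the exponential factors in a Hadamard bound.

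You also leave the other-cusp contributions unresolved, and neither of your fallbacks is what actually happens. The needed input is the trace decomposition, Proposition~\ref{prop:trace-decomposition}, together with the fact that $G_{k,D_1,D_2}$ has zero constant term unless $D_2=1$. This holds because $L_{D_2}(0)=0$ for $D_2>1$, and $L_{D_1}(1-k)$ is a trivial zero when $D_2<0$.

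Hence every summand with $D_2\neq1$, like the interior part of the $D_2=1$ summand, consists only of products $\sigma(a)\sigma(b)$ with $a,b\geq1$ and $a+b\leq M=n|D|$. This gives at most $\nu(D)C_eM^{2e+1}B_{k,e}M^{2k}$ in total. That is negligible against the endpoint term $B_{k,e}L_D(1-k)n^{2e}\sigma_{k-1,D,1}(n)$, because $|L_D(1-k)|=2\Gamma(k)(2\pi)^{-k}|D|^{k-1/2}L_D(k)$ and $L_D(k)\geq1/\zeta(2)$. The saving is the factor $\Gamma(k)$ carried only by the constant term. It is neither a power $|D|^{-\ell}$ nor a row scaling.

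Your Petersson step is fine and agrees with the paper. The companion value there is $L(f,2\ell-2e-1)$ rather than $L(f,2\ell-1)$, but this is harmless: only the factor $L(f\otimes\chi_D,\ell)$ and the nonvanishing of the constant matter.
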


For $D=1$, the trace is trivial and $G_{k,1}$ is a nonzero scalar multiple
of the normalized level-one Eisenstein series $E_k$.

\begin{corollary}\label{cor:untwisted}
For every fixed integer $r\geq 1$, there exists $K_0(r)$ such that, for
every $K\geq K_0(r)$ with $K\equiv 0\pmod 4$, the forms
\[
 [E_{K/2-2e},E_{K/2-2e}]_{2e},\qquad 1\leq e\leq r,
\]
are linearly independent in $S_K(\SLtwoZ)$.  In particular,
\[
 \#\{f\in\mathcal B_K:L(f,K/2)\neq 0\}\geq r,
\]
where $\mathcal B_K$ denotes the normalized Hecke eigenbasis of
$S_K(\SLtwoZ)$.
\end{corollary}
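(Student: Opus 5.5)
The plan is to specialize Theorem~\ref{thm:main} to $D=1$ and translate its notation; there is essentially nothing new to prove. Set $K=2\ell$. The parity condition $(-1)^\ell D>0$ with $D=1$ says exactly that $\ell$ is even, which is the hypothesis $K\equiv 0\pmod 4$. Take $K_0(r):=2\max\{\ell_0(1,r),\,2r+4\}$, where $\ell_0(1,r)$ is the effective bound from Theorem~\ref{thm:main}. The extra term $2r+4$ guarantees that every weight $k=\ell-2e$ with $1\le e\le r$ satisfies $k\ge 4$, which the construction of \cite{KayathEtAl} requires. Then for $K\ge K_0(r)$ and $K\equiv 0\pmod 4$, Theorem~\ref{thm:main} applies with $\ell=K/2$.

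Next I would identify the forms. For $D=1$ we have $|D|=1$, so $\Tr_1^{1}$ is the identity map. As remarked after Theorem~\ref{thm:main}, $G_{k,1}=c_k E_k$ for some nonzero scalar $c_k$. The Rankin--Cohen bracket $[\cdot,\cdot]_{2e}$ is bilinear, so
\[
 \mathcal F^{(1)}_{\ell,e}=[G_{\ell-2e,1},G_{\ell-2e,1}]_{2e}
 =c_{\ell-2e}^2\,[E_{K/2-2e},E_{K/2-2e}]_{2e}.
\]
Each scalar $c_{\ell-2e}^2$ is nonzero. Rescaling the members of a family by nonzero constants preserves linear independence. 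Hence the independence of $\{\mathcal F^{(1)}_{\ell,e}\}_{1\le e\le r}$ in $S_{2\ell}(\SLtwoZ)=S_K(\SLtwoZ)$ gives the independence of $\{[E_{K/2-2e},E_{K/2-2e}]_{2e}\}_{1\le e\le r}$.

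For the nonvanishing count, $\chi_1$ is the trivial character, so $L(f\otimes\chi_1,\ell)=L(f,K/2)$. Also $\mathcal B_{2\ell}=\mathcal B_K$. Thus \eqref{eq:nonvanishing-count} with $D=1$ is exactly the stated bound $\#\{f\in\mathcal B_K:L(f,K/2)\neq0\}\ge r$.

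The only point needing care is the claim that $c_k\neq 0$. This depends on how $G_{k,D}$ is normalized in \S\ref{sec:preliminaries}. I expect the constant to be a product of elementary factors and possibly a value such as $L(1-k,\chi_D)$ or $L(k,\chi_D)$, with $\chi_D$ trivial here. I would check it directly from that definition. For $k\ge4$ even these values are nonzero, since $\zeta(k)\neq0$ and $B_k\neq0$, so the scaling is harmless.
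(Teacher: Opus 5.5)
Your proposal is correct and follows the paper's own proof: set $K=2\ell$ and $D=1$ in Theorem~\ref{thm:main}, note that $\Tr_1^1$ is the identity, and use $G_{k,1}=\tfrac{\zeta(1-k)}{2}E_k$. The scalar is nonzero because $\zeta(1-k)=-B_k/k\neq 0$ for even $k\ge 4$, so bilinearity of the bracket transfers both the linear independence and the count $\#\{f\in\mathcal B_K: L(f,K/2)\neq 0\}\ge r$. The extra $\max$ with $2r+4$ in your $K_0(r)$ is harmless, since the paper's effective $\ell_0(D,r)$ is already at least $\ell_1\ge 2r+4$.
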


The theorem keeps $D$ and $r$ fixed while $\ell$ tends to infinity.  We
give an effective method to find $\ell$, but do not optimize its dependence on $r$ or
assert a positive proportion of nonvanishing central values.  

\

It is clear that this work owes a great deal of intellectual and technical debt to \cite{KayathEtAl}. In developing this work we benefited from running some experiments using Chatgpt 6 Astra provided via Openai's ChatGPT for Academic Researchers granted to Julius Ross who kindly provided us access. The author is partially supported by a Collaboration Grant from the Simons Foundation.

\section{Eisenstein series, traces, and Petersson kernels}
\label{sec:preliminaries}

We recall the formulas from \cite{KayathEtAl} that are needed in the proof of the main theorem.
Throughout, $q=e^{2\pi iz}$. For a modular form $F$ we set 
\[
 F^{(j)}(z):=\left(\frac{1}{2\pi i}\frac{d}{dz}\right)^jF(z)
 =\left(q\frac{d}{dq}\right)^jF(z).
\]
For modular forms $F$ and $H$ of respective weights $a$ and $b$, their
$m$th Rankin--Cohen bracket is
\begin{equation}\label{eq:RC-definition}
 [F,H]_m=\sum_{j=0}^m(-1)^j
 \binom{m+a-1}{m-j}\binom{m+b-1}{j}F^{(j)}H^{(m-j)}.
\end{equation}
See \cite{Cohen,ZagierDifferential}.  The brackets used below have
order $2e\geq 2$ and are cusp forms. 

If $w$ is an even integer and $H$ is a weight-$w$
modular form of level $N$ and trivial character, we define 
\begin{equation}\label{eq:trace-definition}
 \Tr_1^N H:=\sum_{\gamma\in\Gamma_0(N)\backslash\SLtwoZ}H|_w\gamma,
 \qquad
 (H|_w\gamma)(z):=(cz+d)^{-w}H(\gamma z),
\end{equation}
where $\gamma=\left(\begin{smallmatrix}a&b\\c&d\end{smallmatrix}\right)$.
This is the unnormalized trace used in \cite{KayathEtAl}.  The Petersson
inner product on $S_w(\SLtwoZ)$ is
\begin{equation}\label{eq:Petersson-definition}
 \langle F,H\rangle:=
 \int_{\SLtwoZ\backslash\Hh}F(z)\overline{H(z)}y^w\,\frac{dx\,dy}{y^2}.
\end{equation}

For a fundamental discriminant $A$ and $\RePart(s)>1$, write
\[
 L_A(s):=L(s,\chi_A)=\sum_{n\geq 1}\frac{\chi_A(n)}{n^s}.
\]
We use the same notation for its meromorphic continuation.
Let $k\geq 4$, let $D$ be an odd fundamental discriminant satisfying
$(-1)^kD>0$, and suppose that $D=D_1D_2$ is an ordered factorization into
relatively prime fundamental discriminants.  Following
\cite[Section~4]{KayathEtAl}, put
\begin{equation}\label{eq:Eis-general}
 G_{k,D_1,D_2}(z)=\sum_{n\geq 0}\sigma_{k-1,D_1,D_2}(n)q^n,
\end{equation}
where
\begin{equation}\label{eq:twisted-divisor-sum}
 \sigma_{k-1,D_1,D_2}(n)=
 \begin{cases}
 -L_{D_1}(1-k)L_{D_2}(0),&n=0,\\[3pt]
 \displaystyle\sum_{ab=n}\chi_{D_1}(a)\chi_{D_2}(b)a^{k-1},&n>0.
 \end{cases}
\end{equation}
With these parity and coprimality hypotheses,
\[
 G_{k,D_1,D_2}\in M_k(\Gamma_0(|D|),\chi_D).
\]
The constant term in \eqref{eq:twisted-divisor-sum} vanishes unless
$D_2=1$.  Indeed, if
$D_2\neq 1$ is positive, then $\chi_{D_2}$ is a nontrivial even character
and $L_{D_2}(0)=0$.  If $D_2<0$, then $\chi_{D_2}(-1)=-1$; since
$\chi_D(-1)=(-1)^k$, we have
$\chi_{D_1}(-1)=(-1)^{k+1}$, and the parity trivial zero gives
$L_{D_1}(1-k)=0$.  In the exceptional case $D_2=1$,
\begin{equation}\label{eq:constant-term}
 \sigma_{k-1,D,1}(0)=\frac{L_D(1-k)}2,
\end{equation}
because $L_1(0)=\zeta(0)=-1/2$.  We abbreviate
\begin{equation}\label{eq:GkD}
 G_{k,D}:=G_{k,D,1}
 =\frac{L_D(1-k)}2
 +\sum_{n\geq 1}\left(\sum_{a\mid n}\chi_D(a)a^{k-1}\right)q^n.
\end{equation}
Then $G_{k,D}\in M_k(\Gamma_0(|D|),\chi_D)$.  Since $G_{k,D}$ has nebentypus $\chi_D$,
the bracket $[G_{k,D},G_{k,D}]_{2e}$ has nebentypus $\chi_D^2$, which is
trivial on $\Gamma_0(|D|)$.  Thus the trace in
\eqref{eq:intro-kernel} is a trace of a modular form with trivial
character.

For a $q$-series $H(z)=\sum_{n\geq 0}c_H(n)q^n$, let
\[
 U_mH(z):=\sum_{n\geq 0}c_H(mn)q^n.
\]
The following trace decomposition is \cite[Proposition~6.3]{KayathEtAl}.

\begin{proposition}[Kayath--Lane--Neifeld--Ni--Xue]
\label{prop:trace-decomposition}
Let $k\geq 4$, $e>0$, $\ell=k+2e$, and let $D$ be an odd fundamental
discriminant satisfying $(-1)^\ell D>0$.  Then
\begin{equation}\label{eq:trace-decomposition}
 \mathcal F_{D,k,e}=
 \sum_{D=D_1D_2}\chi_{D_2}(-1)|D_2|^{-2e}
 U_{|D_2|}[G_{k,D_1,D_2},G_{k,D_1,D_2}]_{2e},
\end{equation}
where the sum runs over ordered factorizations of $D$ into relatively prime
fundamental discriminants.
\end{proposition}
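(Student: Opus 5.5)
The plan is to compute the trace directly from an explicit set of coset representatives for $\Gamma_0(N)\backslash\SLtwoZ$, where $N=|D|$. The ingredients are Atkin--Lehner operators, the equivariance of Rankin--Cohen brackets under the slash action, and the known behaviour of the Eisenstein series $G_{k,D}$ under Atkin--Lehner involutions.

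\emph{Coset representatives.} Since $D$ is odd and fundamental, $N$ is squarefree, and the cusps of $\Gamma_0(N)$ correspond to divisors $Q\mid N$. For each $Q\mid N$ choose an Atkin--Lehner matrix $W_Q$ of level $N$. A standard coset computation then gives, for any weight-$w$ form $H$ on $\Gamma_0(N)$ with trivial character,
\[
 \Tr_1^N H=\sum_{Q\mid N}\ \sum_{j\bmod Q} H|_w W_Q\left(\begin{smallmatrix}1&j\\0&Q\end{smallmatrix}\right)\cdot(\text{det. normalization})
 =\sum_{Q\mid N} Q^{1-w/2}\,U_Q\bigl(H|_wW_Q\bigr).
\]
I would first verify this identity, fixing the exact power of $Q$ attached to the normalization of $|_w$ on matrices of determinant $Q$. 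Here $w=2\ell=2k+4e$.

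\emph{Equivariance.} Rankin--Cohen brackets commute with the slash action: $[F|_a\gamma,H|_b\gamma]_m=[F,H]_m|_{a+b+2m}\gamma$ for all $\gamma\in\mathrm{GL}_2^+(\mathbf R)$. Hence
\[
 [G_{k,D},G_{k,D}]_{2e}|W_Q=[G_{k,D}|_kW_Q,\ G_{k,D}|_kW_Q]_{2e}.
\]

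\emph{Atkin--Lehner action on Eisenstein series.} For $Q\mid N$, let $D_2$ be the fundamental discriminant with $|D_2|=Q$ and $D_1=D/D_2$. The classical formula, obtained from the Fourier expansion of Eisenstein series at the cusp $1/(N/Q)$ or by comparing the two Hecke-eigen Eisenstein series, is
\[
 G_{k,D}|_kW_Q=c_Q\,G_{k,D_1,D_2},\qquad c_Q=\epsilon\cdot\tau(\chi_{D_2})\,Q^{-k/2}\cdot(\text{character value}),
\]
where $\tau$ denotes a Gauss sum. This step needs the most care: the overall sign, the character factor $\chi_{D_1}(Q)\chi_{D_2}(N/Q)$, and whether the Gauss sum appears as $\tau(\chi_{D_2})$ or as $\overline{\tau}$. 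I would pin these down by computing the constant term at the relevant cusp, checking against the $D_2=1$ case and the vanishing constant-term discussion above.

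\emph{Assembling.} The bracket is quadratic in $G$, so the constant becomes $c_Q^2$. Any character values enter squared and drop out. Since $\tau(\chi_{D_2})^2=\chi_{D_2}(-1)|D_2|$, we get $c_Q^2=\chi_{D_2}(-1)\,Q^{1-k}$ up to the sign $\epsilon^2=1$. Multiplying by $Q^{1-w/2}=Q^{1-k-2e}$ from the trace formula should leave exactly $\chi_{D_2}(-1)Q^{-2e}$, possibly after absorbing a factor $Q^{\pm(k-1)}$ coming from the normalization of $|_kW_Q$ in the previous step. Reconciling these powers of $Q$ is the main obstacle. If the exponents do not cancel on the first attempt, I would recalibrate using the case $e=0$, where everything reduces to known trace formulas for products of Eisenstein series. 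Finally, summing over $Q\mid N$, equivalently over ordered coprime factorizations $D=D_1D_2$, yields \eqref{eq:trace-decomposition}.
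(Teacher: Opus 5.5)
Your skeleton is the natural one, and its formal parts are sound. (The paper gives no proof of this statement; it quotes \cite[Proposition~6.3]{KayathEtAl}.) With the $(\det)^{w/2}$-normalized slash, $\Tr_1^NH=\sum_{Q\mid N}Q^{1-w/2}U_Q(H|_wW_Q)$ holds for squarefree $N$: the bottom rows of $Q^{-1}W_Q\left(\begin{smallmatrix}1&j\\0&Q\end{smallmatrix}\right)\in\SLtwoZ$ hit each point of $\mathbf P^1(\mathbf Z/N\mathbf Z)$ exactly once. The bracket is equivariant, $W_Q$ carries $G_{k,D}$ to $c_Q\,G_{k,D_1,D_2}$, and the $\pm1$ character values in $c_Q$ disappear on squaring.

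The gap is the one non-formal input, the power of $Q$ in $c_Q$. The value you write down is wrong, and nothing in your plan would repair it. The target forces $c_Q^2=\chi_{D_2}(-1)Q^{k-1}$. Your $c_Q\propto\tau(\chi_{D_2})Q^{-k/2}$ gives $c_Q^2=\chi_{D_2}(-1)Q^{1-k}$, hence the coefficient $\chi_{D_2}(-1)Q^{2-2k-2e}$. This is not a convention issue, because you already fixed the slash normalization in the trace step. Your proposed calibrations cannot detect the discrepancy:
\begin{itemize}
\item For $Q>1$, both $G_{k,D}|_kW_Q$ and $G_{k,D_1,D_2}$ have zero constant term at $\infty$ (the trivial-zero argument following \eqref{eq:twisted-divisor-sum}). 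Comparing constant terms therefore gives $0=0$.
\item The case $D_2=1$ is $Q=1$, where no power of $Q$ occurs.
\item The case $e=0$ is not an independently known identity; it needs the same constant.
\end{itemize}
Adjusting exponents until the desired formula appears would be circular.

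To close the gap, compute $c_Q$ with lattice sums. Put $N_1=|D_1|$, $Q=|D_2|$ and $\kappa_k=(-2\pi i)^k/(k-1)!$. The Lipschitz formula and $\sum_{a\bmod N_1}\chi_{D_1}(a)e^{2\pi i ra/N_1}=\chi_{D_1}(r)\tau(\chi_{D_1})$ give
\begin{gather*}
\Phi(z):=\sum_{(m,n)\neq(0,0)}\chi_D(n)(Nmz+n)^{-k}=2N^{-k}\tau(\chi_D)\kappa_k\,G_{k,D},\\
\Psi(z):=\sum_{(m,n)\neq(0,0)}\chi_{D_2}(m)\chi_{D_1}(n)(N_1mz+n)^{-k}=2N_1^{-k}\tau(\chi_{D_1})\kappa_k\,G_{k,D_1,D_2}.
\end{gather*}
Take $W_Q=\left(\begin{smallmatrix}Q\alpha&\beta\\N\gamma&Q\delta\end{smallmatrix}\right)$ with $Q\alpha\delta-N_1\beta\gamma=1$. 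The numerator of $NmW_Qz+n$ equals $Q(N_1Mz+M')$, where $(M,M')=(m,n)\left(\begin{smallmatrix}Q\alpha&N_1\beta\\\gamma&\delta\end{smallmatrix}\right)$ is a unimodular substitution. This gives $\Phi|_kW_Q=\chi_{D_1}(Q\alpha)\chi_{D_2}(-N_1\beta)\,Q^{-k/2}\,\Psi$. Since $\tau(\chi_D)=\pm\tau(\chi_{D_1})\tau(\chi_{D_2})$, you get $G_{k,D}|_kW_Q=\pm Q^{k/2}\tau(\chi_{D_2})^{-1}G_{k,D_1,D_2}$. Hence $c_Q^2=\chi_{D_2}(-1)Q^{k-1}$ and $Q^{1-k-2e}c_Q^2=\chi_{D_2}(-1)Q^{-2e}$, as required.

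Your value $\tau(\chi_{D_2})Q^{-k/2}$ is, up to sign, the constant in the reverse relation $G_{k,D_1,D_2}|_kW_Q=\pm\tau(\chi_{D_2})Q^{-k/2}G_{k,D}$. That relation follows from the forward one because $W_Q^2\in Q\,\Gamma_0(N)$.
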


The second input is the Petersson inner product formula
\cite[Proposition~5.6]{KayathEtAl}.  For a normalized Hecke eigenform
$f(z)=\sum_{n\geq 1}a_f(n)q^n\in S_{2\ell}(\SLtwoZ)$, set
\[
 L(f,s)=\sum_{n\geq 1}\frac{a_f(n)}{n^s},\qquad
 L(f\otimes\chi_D,s)=\sum_{n\geq 1}\frac{\chi_D(n)a_f(n)}{n^s}.
\]
These functions analytically continue to the whole complex plane and we use the same notation for the analytic continuation. 

\begin{proposition}[Kayath--Lane--Neifeld--Ni--Xue]
\label{prop:Petersson-formula}
Under the hypotheses of Proposition~\ref{prop:trace-decomposition}, one has
\begin{align}\label{eq:Petersson-formula}
 \langle \mathcal F_{D,k,e},f\rangle
 &=\frac12
 \frac{\Gamma(2k+4e-1)\Gamma(k+2e)}
 {(2e)!(4\pi)^{2k+4e-1}\Gamma(k)}
 \frac{L_D(1-k)}{L_D(k)} \notag\\
 &\hspace{35mm}\times
 L(f,2k+2e-1)L(f\otimes\chi_D,k+2e).
\end{align}
\end{proposition}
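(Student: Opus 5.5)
The plan is the Rankin--Selberg unfolding method: write one of the two factors $G_{k,D}$ as an Eisenstein series attached to the cusp $\infty$, and unfold the Petersson integral against it. Zagier's computation of $\langle f,[g,E]_m\rangle$ for an Eisenstein series $E$ then reduces the bracket to a single derivative term. The result is a Rankin--Selberg Dirichlet series, which factors into the two $L$-functions in \eqref{eq:Petersson-formula}.

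\emph{Step 1: remove the trace.} Since $f$ has level one, $f|_{2\ell}\gamma=f$ for all $\gamma\in\SLtwoZ$. The trace is therefore adjoint to inclusion, giving
\[
\langle\mathcal F_{D,k,e},f\rangle=\int_{\Gamma_0(|D|)\backslash\Hh}[G_{k,D},G_{k,D}]_{2e}\,\overline{f}\,y^{2\ell}\,\frac{dx\,dy}{y^2}.
\]
This avoids Proposition~\ref{prop:trace-decomposition} altogether; that proposition is needed only for $q$-expansions.

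\emph{Step 2: identify the Eisenstein series.} Identify $G_{k,D}$ with $\frac{L_D(1-k)}{2}E_{k,\chi_D}$, where
\[
E_{k,\chi_D}(z)=\frac12\sum_{\substack{(c,d)=1\\ |D|\mid c}}\overline{\chi_D(d)}\,(cz+d)^{-k}
\]
is the Eisenstein series at $\infty$. Two facts support this. First, the parity condition $(-1)^kD>0$ makes the sum nonzero. Second, $E_{k,\chi_D}$ has constant term $1$ at $\infty$ and constant term $0$ at the other cusps. This matches the constant term \eqref{eq:constant-term} and the description of the other components $G_{k,D_1,D_2}$, which have vanishing constant terms.

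\emph{Step 3: unfold.} Put $g=G_{k,D}$ and $E=E_{k,\chi_D}$. By Zagier's lemma (equivalently, holomorphic projection), modulo derivatives that pair trivially with cusp forms,
\[
[g,E]_{2e}\equiv c_{k,e}\,\Bigl(g^{(2e)}E\Bigr)\quad\text{in the pairing with } f.
\]
Here $c_{k,e}$ is read off from \eqref{eq:RC-definition}, and I expect it to be essentially $\binom{2e+k-1}{2e}$. Next, unfold $E$ over $\Gamma_\infty\backslash\Gamma_0(|D|)$, using that $\chi_D$ is real. This gives
\[
c_{k,e}\int_0^\infty\!\!\int_0^1 g^{(2e)}\overline{f}\,y^{2\ell-2}\,dx\,dy
=c_{k,e}\frac{\Gamma(2\ell-1)}{(4\pi)^{2\ell-1}}\sum_{n\geq1}\frac{n^{2e}\sigma_{k-1,D,1}(n)a_f(n)}{n^{2\ell-1}}.
\]
Combining the factorials should produce $\frac{\Gamma(k+2e)}{(2e)!\Gamma(k)}$ together with the factor $\frac12$ and $\frac{L_D(1-k)}{2}$. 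The leftover constant should be tracked against the stated formula; the $\frac12$ in \eqref{eq:Petersson-formula} is likely the $\frac12$ in the definition of $E$.

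\emph{Step 4: the Dirichlet series.} Use the classical Hecke identity, valid for $a_f$ multiplicative with real coefficients:
\[
\sum_{n\geq1}\frac{a_f(n)\sigma_{k-1,\chi}(n)}{n^{s}}=\frac{L(f,s)\,L(f\otimes\chi,s-k+1)}{L(2s-k-2\ell+2,\chi)}.
\]
The sum in Step 3 has exponent $2\ell-1-2e=2k+2e-1$, so set $s=2k+2e-1$. Then $s-k+1=k+2e$, and $2s-k-2\ell+2=k$. This produces exactly $L(f,2k+2e-1)\,L(f\otimes\chi_D,k+2e)/L_D(k)$, matching \eqref{eq:Petersson-formula}.

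\emph{Main obstacle: justifying the unfolding.} First, check absolute convergence. Here $s=2k+2e-1$ lies well to the right of the region $\RePart(s)>\ell+\tfrac12+(k-1)$ where the series converges absolutely, because $k\geq4$. Second, $g^{(2e)}E$ is not itself modular, so the reduction in Step 3 must be made rigorous. I would first try the direct route: pair $f$ against the nonholomorphic series $\sum\overline{\chi_D(d)}\,g^{(2e)}|_{k+2e}\gamma\cdot(\dots)$ and check that the terms of $[g,E]_{2e}$ other than $g^{(2e)}E$ integrate to zero against $f$, via Zagier's argument with the raising operator. If that bookkeeping becomes unwieldy, the fallback is to cite the general formula for $\langle f,[g,E_{k,\chi}]_m\rangle$ from Zagier's paper on modular forms whose coefficients involve zeta functions of quadratic fields, and specialize it to the present case.
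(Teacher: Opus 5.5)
The paper gives no proof of this proposition; it quotes \cite[Proposition~5.6]{KayathEtAl}. Your route is the standard one: adjointness of $\Tr_1^{|D|}$, the identification $G_{k,D}=\frac{L_D(1-k)}2E_{k,\chi_D}$, unfolding, and Hecke's identity. It does lead to the stated constant. The gap is in the analytic justification, which you assert incorrectly. You quote the region of absolute convergence $\RePart s>\ell+\tfrac12+(k-1)=2k+2e-\tfrac12$. Your evaluation point $s=2k+2e-1$ lies $\tfrac12$ to the \emph{left} of this, not to the right. This cannot be avoided, because $s-k+1=\ell$ is the center of $L(f\otimes\chi_D,s)$. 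At a prime $p\nmid D$ the term of $\sum_n\sigma_{k-1,D,1}(n)a_f(n)n^{-s}$ has size about $p^{-1/2}\cdot|a_f(p)|p^{1/2-\ell}$, so the series diverges absolutely there. Consequently two steps are not justified as written. The first is the passage from $\int_0^\infty\!\int_0^1 g^{(2e)}\overline f\,y^{2\ell-2}\,dx\,dy$ to $\frac{\Gamma(2\ell-1)}{(4\pi)^{2\ell-1}}\sum_n n^{2e}\sigma_{k-1,D,1}(n)a_f(n)n^{1-2\ell}$ by termwise integration in $y$. The second is applying the Euler-product identity at that point. The value $L(f\otimes\chi_D,\ell)$ in the statement is an analytically continued central value, and your argument has to produce it as such.

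\emph{Repair.} First show that the unfolded integral converges absolutely. Put $\phi=g^{(2e)}\overline f\,y^{2\ell}$. Then $\sum_{\gamma\in\Gamma_\infty\backslash\Gamma_0(|D|)}|\phi(\gamma z)|=y^\ell|f(z)|\,\Phi(z)$, where $\Phi(z)=\sum_\gamma \mathrm{Im}(\gamma z)^\ell|g^{(2e)}(\gamma z)|$. The function $\Phi$ is $\Gamma_0(|D|)$-invariant. By the transformation law for $g^{(2e)}$ and $|c/(cz+d)|\le 1/y$, it has polynomial growth at every cusp, and this growth is killed by the exponential decay of $y^\ell|f|$. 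Next insert $y^s$. For $\RePart s>\tfrac12$ the integral equals $\Gamma(2\ell-1+s)(4\pi)^{1-2\ell-s}L(f,2k+2e-1+s)L(f\otimes\chi_D,\ell+s)/L_D(k+2s)$. Both sides are holomorphic on $\RePart s>0$, and the left side is continuous as $s\to0^+$ by dominated convergence. Letting $s\to 0^+$ gives the formula.

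\emph{Two smaller points.} First, the statement ``$[g,E]_{2e}\equiv c_{k,e}\,g^{(2e)}E$ in the pairing with $f$'' is not meaningful, since $g^{(2e)}E$ is not modular. What holds exactly is $[g,E]_{2e}=\binom{k+2e-1}{2e}\sum_{\gamma\in\Gamma_\infty\backslash\Gamma_0(|D|)}g^{(2e)}|_{2\ell}\gamma$. This follows from equivariance of brackets, $\chi_D^2=1$, and $[g,1]_{2e}=\binom{k+2e-1}{2e}g^{(2e)}$ with $1$ regarded as having weight $k$. No raising-operator argument is needed. Second, the $\frac12$ in your definition of $E_{k,\chi_D}$ is absorbed by $\pm1\in\Gamma_\infty$ in the unfolding. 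The $\frac12$ in the formula is the one in $\frac{L_D(1-k)}2$. Keeping both, as you suggest, would make the result off by a factor of $2$.
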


\section{An asymptotic formula for Fourier coefficients}
\label{sec:endpoint}

Write
\begin{equation}\label{eq:coeff-def}
 \mathcal F_{D,k,e}(z)=\sum_{n\geq 1}A_{D,k,e}(n)q^n
\end{equation}
and set
\begin{equation}\label{eq:B-def}
 B_{k,e}:=\binom{k+2e-1}{2e}.
\end{equation}
For a fixed Fourier index, the terms in which one Eisenstein coefficient is
constant dominate all terms in which both indices are positive.

For a fundamental discriminant $D$, we let $\nu(D)$ be the number of ordered factorizations
$D=D_1D_2$ into relatively prime fundamental discriminants.

\begin{proposition}\label{prop:endpoint}
Fix an odd fundamental discriminant $D$, an integer $e\geq 1$, and a
positive integer $n$.  Let $k\geq 4$ tend to infinity through integers
satisfying $(-1)^kD>0$.  Then
\begin{equation}\label{eq:endpoint-asymptotic}
 \frac{A_{D,k,e}(n)}{B_{k,e}L_D(1-k)}
 -n^{2e}\sigma_{k-1,D,1}(n)\longrightarrow 0.
\end{equation}
More precisely, with $M=n|D|$,
\begin{align}\label{eq:effective-error}
 \left|\frac{A_{D,k,e}(n)}{B_{k,e}L_D(1-k)}
 -n^{2e}\sigma_{k-1,D,1}(n)\right|
 \leq \frac{\zeta(2)}2\nu(D)C_eM^{2e+1}|D|^{1/2}
 \frac{(2\pi M^2/|D|)^k}{\Gamma(k)}.
\end{align}
In particular, the convergence in \eqref{eq:endpoint-asymptotic} is
effective.
\end{proposition}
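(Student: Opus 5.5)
We expand $A_{D,k,e}(n)$ using Proposition~\ref{prop:trace-decomposition} and isolate the contribution of the constant term of $G_{k,D}$.

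\emph{Step 1: isolating the main term.} By \eqref{eq:trace-decomposition}, $A_{D,k,e}(n)$ is a sum over the $\nu(D)$ ordered factorizations $D=D_1D_2$ of
\[
\chi_{D_2}(-1)|D_2|^{-2e}\sum_{j=0}^{2e}(-1)^j\binom{2e+k-1}{2e-j}\binom{2e+k-1}{j}\sum_{a+b=n|D_2|}a^jb^{2e-j}\sigma_{k-1,D_1,D_2}(a)\sigma_{k-1,D_1,D_2}(b),
\]
where $a,b\geq 0$. As computed in \S\ref{sec:preliminaries}, the constant term $\sigma_{k-1,D_1,D_2}(0)$ vanishes unless $D_2=1$. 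So the only terms with $a=0$ or $b=0$ come from $D_2=1$, $U_1$.

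In that case only $j=0$ survives when $a=0$, and only $j=2e$ survives when $b=0$. Each surviving term equals $\binom{k+2e-1}{2e}\cdot\frac{L_D(1-k)}{2}\,n^{2e}\sigma_{k-1,D,1}(n)$; the second binomial factor is $\binom{k+2e-1}{0}=1$. The sign is $(-1)^{2e}=1$. Summing the two endpoints gives exactly $B_{k,e}L_D(1-k)\,n^{2e}\sigma_{k-1,D,1}(n)$, which is the main term.

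\emph{Step 2: bounding the remaining terms.} Everything left has $a,b\geq 1$ with $a+b=M'\leq M=n|D|$. We use $|\sigma_{k-1}(m)|\leq \sigma_{k-1}(m)\leq \zeta(k-1)m^{k-1}$, so each product is at most about $(ab)^{k-1}\leq (M^2/4)^{k-1}$.

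The binomial weights satisfy
\[
\sum_j\binom{k+2e-1}{2e-j}\binom{k+2e-1}{j}a^jb^{2e-j}\leq B_{k,e}\,C_e\,M^{2e}
\]
for a constant $C_e$ depending only on $e$. This holds because each ratio $\binom{k+2e-1}{2e-j}\binom{k+2e-1}{j}/B_{k,e}$ is bounded by a function of $e$: it behaves like $k^{2e-j}k^{j}/k^{2e}$ up to factorials.

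The number of pairs $(a,b)$ is at most $M$, and there are $\nu(D)$ factorizations. Dividing by $B_{k,e}|L_D(1-k)|$, the error is thus
\[
\ll \nu(D)C_eM^{2e+1}\frac{(M^2)^{k}}{|L_D(1-k)|}.
\]

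\emph{Step 3: the key lower bound, which is the main obstacle.} It remains to bound $|L_D(1-k)|$ from below. By the functional equation for $L(s,\chi_D)$ (with the parity $(-1)^kD>0$ ensuring the nontrivial branch),
\[
|L_D(1-k)|=\frac{2\Gamma(k)|D|^{k-1/2}}{(2\pi)^k}|L_D(k)|,
\qquad |L_D(k)|\geq \frac{1}{\zeta(k)}\geq \frac{1}{\zeta(2)}\cdot(\text{const}).
\]
Substituting gives the shape
\[
\frac{\zeta(2)}{2}\,|D|^{1/2}\frac{(2\pi M^2/|D|)^k}{\Gamma(k)}.
\]
The factors of $4$ and the factors $\zeta(k-1)^2$ need careful bookkeeping so they are absorbed into $C_e$ and $\zeta(2)$ exactly as stated in \eqref{eq:effective-error}. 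Since $\Gamma(k)$ grows superexponentially, the right-hand side tends to $0$, which gives \eqref{eq:endpoint-asymptotic}.

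The delicate points are the binomial ratio bound uniform in $k$ and the precise constant tracking. The conceptual step is the vanishing of the constant terms when $D_2\neq1$, which confines the main term to the single factorization $D_2=1$.
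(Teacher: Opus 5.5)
Your proposal is correct and follows the paper's proof step for step. It uses the same endpoint isolation from the vanishing of $\sigma_{k-1,D_1,D_2}(0)$ for $D_2\neq 1$, and the same uniform binomial ratio bound; made precise as $\binom{2e}{j}(1+\frac{2e-1}{k})^{2e}$ and summed over $j$, that bound gives exactly $C_e=(\frac{2e+3}{2})^{2e}$. It also uses the same functional-equation and Euler-product lower bound $L_D(k)\geq\zeta(2k)/\zeta(k)\geq 1/\zeta(2)$, with no extra constant. The ``careful bookkeeping'' you flag is not needed: your estimate $\zeta(k-1)^2(M^2/4)^{k-1}$ is already at most $M^{2k}$ for $k\geq 4$, and $M^{2k}$ is the cruder bound the paper itself uses via $|\sigma_{k-1,D_1,D_2}(m)|\leq\tau(m)m^{k-1}\leq m^k$.
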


We first record an elementary inequality that will be used in the proof of the proposition. 

\begin{lemma}\label{lem:binomial}
Let $k\geq 4$, $e\geq 1$, and $0\leq j\leq 2e$.  Then
\begin{equation}\label{eq:binomial-bound}
 \frac{\binom{k+2e-1}{j}\binom{k+2e-1}{2e-j}}
 {\binom{k+2e-1}{2e}}
 \leq \binom{2e}{j}\left(\frac{2e+3}{4}\right)^{2e}.
\end{equation}
Consequently, for $a,b\geq 0$,
\begin{equation}\label{eq:C-bound}
 \left|\sum_{j=0}^{2e}(-1)^ja^jb^{2e-j}
 \binom{k+2e-1}{2e-j}\binom{k+2e-1}{j}\right|
 \leq B_{k,e}C_e(a+b)^{2e},
\end{equation}
where
\begin{equation}\label{eq:C-def}
 C_e:=\left(\frac{2e+3}{2}\right)^{2e}.
\end{equation}
\end{lemma}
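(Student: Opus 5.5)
The plan is to rewrite the ratio in \eqref{eq:binomial-bound} with a standard binomial identity so that the factor $\binom{2e}{j}$ appears explicitly. The remaining part is then a short product that can be bounded factor by factor, and \eqref{eq:C-bound} follows from the triangle inequality and the binomial theorem.

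Put $N:=k+2e-1$, so that $B_{k,e}=\binom{N}{2e}$ and $N-2e+1=k\geq 4$. I will use the identity
\[
 \binom{N}{2e}\binom{2e}{j}=\binom{N}{j}\binom{N-j}{2e-j},
\]
which holds because both sides count choosing a $2e$-subset of an $N$-set and then a $j$-subset inside it. It gives
\[
 \frac{\binom{N}{j}\binom{N}{2e-j}}{\binom{N}{2e}}
 =\binom{2e}{j}\frac{\binom{N}{2e-j}}{\binom{N-j}{2e-j}}
 =\binom{2e}{j}\prod_{i=0}^{2e-j-1}\frac{N-i}{N-j-i}.
\]
Each factor equals $1+j/(N-j-i)$. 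For $0\leq i\leq 2e-j-1$ we have $N-j-i\geq N-2e+1=k\geq 4$, so each factor is at most $1+j/4=(4+j)/4$.

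Next I split into cases on $j$. If $j=2e$, the product is empty and equals $1$. If $j\leq 2e-1$, then $(4+j)/4\leq (2e+3)/4$, and there are $2e-j\leq 2e$ factors, each at least $1$. Since $(2e+3)/4\geq 1$, the product is at most $\bigl((2e+3)/4\bigr)^{2e}$. This proves \eqref{eq:binomial-bound}.

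For \eqref{eq:C-bound}, I apply the triangle inequality, using $a,b\geq 0$, and then \eqref{eq:binomial-bound} termwise. This bounds the left side by
\[
 B_{k,e}\left(\frac{2e+3}{4}\right)^{2e}\sum_{j=0}^{2e}\binom{2e}{j}a^jb^{2e-j}
 =B_{k,e}\left(\frac{2e+3}{4}\right)^{2e}(a+b)^{2e}.
\]
This is at most $B_{k,e}C_e(a+b)^{2e}$, since $C_e=2^{2e}\bigl((2e+3)/4\bigr)^{2e}$.

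There is no real obstacle here. The only point needing care is the case $j=2e$: the crude factor bound $(4+j)/4$ would then exceed $(2e+3)/4$, but that case has an empty product, so it does not matter.
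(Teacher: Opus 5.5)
Your proof is correct, but it takes a different route from the paper in both steps.

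For \eqref{eq:binomial-bound}, the paper uses no exact identity. It bounds the denominator below by $\binom{N}{2e}\geq k^{2e}/(2e)!$ and the numerator above by $N^{2e}/(j!(2e-j)!)$. The ratio is then at most $\binom{2e}{j}(N/k)^{2e}=\binom{2e}{j}\bigl(1+\tfrac{2e-1}{k}\bigr)^{2e}$, and $k\geq 4$ finishes the argument uniformly in $j$, with no case split. Your subset-of-a-subset identity extracts $\binom{2e}{j}$ exactly and leaves a product of $2e-j$ factors $1+j/(N-j-i)$. This gives the sharper bound $\binom{2e}{j}(1+j/k)^{2e-j}$, at the cost of the small case distinction at $j=2e$, which you handled correctly.

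For \eqref{eq:C-bound}, the paper uses the cruder estimate $a^jb^{2e-j}\leq(a+b)^{2e}$ and then $\sum_j\binom{2e}{j}=2^{2e}$. That is exactly where the factor $2^{2e}$ in $C_e=\bigl((2e+3)/2\bigr)^{2e}$ comes from. Your use of the binomial theorem avoids this loss and shows that $C_e$ could be replaced by $\bigl((2e+3)/4\bigr)^{2e}$.

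Since $C_e$ enters \eqref{eq:effective-error} only as a constant independent of $k$, the improvement affects only the size of the effective $\ell_0$, not any conclusion. The paper's argument is marginally quicker; yours is tighter.
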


\begin{proof}
Put $N=k+2e-1$.  Since $N-2e+1=k$, we have
\[
 \binom N{2e}
 =\frac{N(N-1)\cdots(N-2e+1)}{(2e)!}
 \geq \frac{k^{2e}}{(2e)!}.
\]
On the other hand,
\[
 \binom Nj\binom N{2e-j}
 \leq \frac{N^{2e}}{j!(2e-j)!}.
\]
The ratio of these two estimates is
\[
 \binom{2e}{j}\left(\frac Nk\right)^{2e}
 =\binom{2e}{j}\left(1+\frac{2e-1}{k}\right)^{2e},
\]
which is at most the right-hand side of
\eqref{eq:binomial-bound}, since $k\geq 4$.  Finally,
$a^jb^{2e-j}\leq(a+b)^{2e}$, and summing
\eqref{eq:binomial-bound} over $j$ gives
\[
 \left(\frac{2e+3}{4}\right)^{2e}
 \sum_{j=0}^{2e}\binom{2e}{j}
 =\left(\frac{2e+3}{2}\right)^{2e}.
\]
This proves \eqref{eq:C-bound}.
\end{proof}

\begin{proof}[Proof of Proposition \ref{prop:endpoint}]
By Proposition~\ref{prop:trace-decomposition}, the coefficient
$A_{D,k,e}(n)$ is a finite sum indexed by the ordered factorizations
$D=D_1D_2$.  If
\[
 G_{k,D_1,D_2}(z)=\sum_{m\geq 0}\sigma_{k-1,D_1,D_2}(m)q^m,
\]
then the $q^n$-coefficient contributed by $(D_1,D_2)$ equals
\begin{equation}\label{eq:coefficient-decomposition}
 \chi_{D_2}(-1)|D_2|^{-2e}
 \sum_{\substack{a,b\geq 0\\a+b=n|D_2|}}
 \sigma_{k-1,D_1,D_2}(a)\sigma_{k-1,D_1,D_2}(b)C_{e,k}(a,b),
\end{equation}
where
\begin{equation}\label{eq:Cek}
 C_{e,k}(a,b):=\sum_{j=0}^{2e}(-1)^ja^jb^{2e-j}
 \binom{k+2e-1}{2e-j}\binom{k+2e-1}{j}.
\end{equation}
This expression is the first form of the coefficient calculation in \cite[Lemma~6.5]{KayathEtAl}.

We separate \eqref{eq:coefficient-decomposition} into its endpoint terms,
where $ab=0$, and its interior remainder, where $a,b\geq 1$.  The constant
term in \eqref{eq:Eis-general} vanishes unless $(D_1,D_2)=(D,1)$, by the
trivial-zero argument following \eqref{eq:twisted-divisor-sum}.  For this
factorization, the endpoint $(a,b)=(0,n)$ contributes
\[
 \frac{L_D(1-k)}2\sigma_{k-1,D,1}(n)n^{2e}B_{k,e},
\]
because only $j=0$ in \eqref{eq:Cek} survives.  The endpoint $(a,b)=(n,0)$
gives the same contribution: only $j=2e$ survives, and $(-1)^{2e}=1$.
Thus the two endpoint terms sum to
\begin{equation}\label{eq:endpoint-exact}
 B_{k,e}L_D(1-k)n^{2e}\sigma_{k-1,D,1}(n).
\end{equation}

It remains to bound the interior contribution $R_{D,k,e}(n)$.  For every
positive integer $m$,
\begin{equation}\label{eq:sigma-bound}
 |\sigma_{k-1,D_1,D_2}(m)|
 \leq\sum_{uv=m}u^{k-1}\leq\tau(m)m^{k-1}\leq m^k.
\end{equation}
Here $\tau(m)$ is the divisor-counting function.  For every term occurring
in \eqref{eq:coefficient-decomposition},
\[
 a+b=n|D_2|\leq n|D|=M.
\]
Lemma~\ref{lem:binomial} and \eqref{eq:sigma-bound} therefore give
\[
 |\sigma_{k-1,D_1,D_2}(a)\sigma_{k-1,D_1,D_2}(b)C_{e,k}(a,b)|
 \leq B_{k,e}C_eM^{2e}M^{2k}.
\]
There are at most $M$ positive pairs $(a,b)$ for each factorization, and
$|D_2|^{-2e}\leq 1$.  Hence
\begin{equation}\label{eq:remainder-bound}
 |R_{D,k,e}(n)|\leq\nu(D)C_eM^{2e+1}B_{k,e}M^{2k}.
\end{equation}

Under the admissible parity condition, the functional equation of the
primitive quadratic Dirichlet $L$-function gives
\begin{equation}\label{eq:Dirichlet-FE-value}
 |L_D(1-k)|=\frac{2\Gamma(k)}{(2\pi)^k}|D|^{k-1/2}L_D(k);
\end{equation}
see \cite[Chapter~9]{Davenport}.
For $D\neq1$, the primitive Gauss sum has absolute value
$\sqrt{|D|}$, and the phase factor in that functional equation has
absolute value $2$ because $\chi_D(-1)=(-1)^k$.  For $D=1$, the same
identity follows from the zeta functional equation.  Since $k>1$, the Euler product is positive and
\begin{equation}\label{eq:Dirichlet-lower}
 L_D(k)=\prod_p(1-\chi_D(p)p^{-k})^{-1}
 \geq\prod_p(1+p^{-k})^{-1}
 =\frac{\zeta(2k)}{\zeta(k)}\geq\frac1{\zeta(2)}.
\end{equation}
Dividing \eqref{eq:remainder-bound} by
$B_{k,e}|L_D(1-k)|$ and using
\eqref{eq:Dirichlet-FE-value}--\eqref{eq:Dirichlet-lower} gives exactly
\eqref{eq:effective-error}.

For every fixed positive $Q$, we have
\[
 \frac{Q^k}{\Gamma(k)}\longrightarrow 0\qquad(k\longrightarrow\infty).
\]
Hence \eqref{eq:endpoint-asymptotic} follows.
\end{proof}

\begin{remark}\label{rem:effective}
Every constant in \eqref{eq:effective-error} is explicit. 
\end{remark}

\section{A determinant limit calculation}
\label{sec:determinant}

Fix $D$ and $r$ as in the statement of Theorem \ref{thm:main}.  Let
$p_1<p_2<\cdots<p_r$ be $r$ primes not dividing $D$, that is, 
\begin{equation}\label{eq:primes}
 p_1<p_2<\cdots<p_r,\qquad p_j\nmid D.
\end{equation}
We call an integer $\ell$ {\em admissible} if $(-1)^\ell D > 0$.
For an admissible $\ell$ and $1\leq e\leq r$, put
$k_e:=\ell-2e$.  When $\ell\geq 2r+4$, all $k_e\geq 4$.  Since
$k_e\equiv\ell\pmod 2$, we have $(-1)^{k_e} D >0$.

For a prime $p\nmid D$, formula \eqref{eq:twisted-divisor-sum} gives
\begin{equation}\label{eq:prime-sigma}
 \sigma_{k-1,D,1}(p)=1+\chi_D(p)p^{k-1}.
\end{equation}
Applying Proposition~\ref{prop:endpoint} with $k=k_e$ and $n=p_j$, and
noting that $B_{k_e,e}=\binom{\ell-1}{2e}$, we obtain, uniformly for the
finitely many pairs $1\leq e,j\leq r$,
\begin{align}\label{eq:prime-asymptotic}
 \frac{A_{D,\ell-2e,e}(p_j)}
 {\binom{\ell-1}{2e}L_D(1-\ell+2e)}
 &=p_j^{2e}\bigl(1+\chi_D(p_j)p_j^{\ell-2e-1}\bigr)+o(1)\notag\\
 &=\chi_D(p_j)p_j^{\ell-1}+p_j^{2e}+o(1)
\end{align}
as $\ell\to\infty$ through the admissible parity class.

Define the $r\times r$ matrix
\begin{equation}\label{eq:Mell}
 M_\ell:=\left(
 \frac{A_{D,\ell-2e,e}(p_j)}
 {\binom{\ell-1}{2e}L_D(1-\ell+2e)}
 \right)_{1\leq e,j\leq r}.
\end{equation}
The denominators are nonzero by
\eqref{eq:Dirichlet-FE-value}.  We now compute the leading term of its
determinant.

\begin{lemma}\label{lem:determinant}
With $M_\ell$ as in \eqref{eq:Mell}, one has
\begin{equation}\label{eq:det-limit}
 \lim_{\substack{\ell\to\infty\\(-1)^\ell D>0}}
 p_r^{1-\ell}\det M_\ell=\chi_D(p_r)\Delta_r,
\end{equation}
where
\begin{align}\label{eq:Delta-matrix}
 \Delta_r:=
 \det\begin{pmatrix}
 p_1^2&p_2^2&\cdots&p_{r-1}^2&1\\
 p_1^4&p_2^4&\cdots&p_{r-1}^4&1\\
 \vdots&\vdots&&\vdots&\vdots\\
 p_1^{2r}&p_2^{2r}&\cdots&p_{r-1}^{2r}&1
 \end{pmatrix}\notag
\end{align}

\end{lemma}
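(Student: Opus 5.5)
We start from the asymptotic \eqref{eq:prime-asymptotic}. The matrix $M_\ell$ has rows indexed by $e$ and columns indexed by $j$, and its $(e,j)$ entry is
\[
 \chi_D(p_j)p_j^{\ell-1}+p_j^{2e}+\varepsilon_{e,j}(\ell),\qquad \varepsilon_{e,j}(\ell)\to 0.
\]
The huge term $\chi_D(p_j)p_j^{\ell-1}$ does not depend on $e$, so it is constant down the $j$th column.

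Fix a column $j$. Its entries are the common value $c_j:=\chi_D(p_j)p_j^{\ell-1}$ plus the vector $v_j:=(p_j^{2e}+\varepsilon_{e,j})_{e}$. Write $\one=(1,\dots,1)^T$. Expand $\det M_\ell$ multilinearly in the columns, choosing in each column either $c_j\one$ or $v_j$. Any term that uses $c_j\one$ in two or more columns contains two proportional columns, so it vanishes. Therefore
\[
 \det M_\ell=\det(v_1,\dots,v_r)+\sum_{i=1}^r c_i\det(v_1,\dots,v_{i-1},\one,v_{i+1},\dots,v_r).
\]

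Now multiply by $p_r^{1-\ell}$ and take the limit. The first determinant is bounded, since its entries converge. Multiplied by $p_r^{1-\ell}\to0$, it tends to $0$.

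For $i<r$, the coefficient satisfies $|c_i|p_r^{1-\ell}=(p_i/p_r)^{\ell-1}\to 0$, because $p_i<p_r$. The determinant multiplying it stays bounded, so these terms also tend to $0$.

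For $i=r$, we have $c_rp_r^{1-\ell}=\chi_D(p_r)$ exactly. The determinant $\det(v_1,\dots,v_{r-1},\one)$ converges, by continuity of the determinant, to the determinant whose columns are $(p_j^{2e})_e$ for $j<r$, followed by $\one$. That is precisely $\Delta_r$. Hence $p_r^{1-\ell}\det M_\ell\to\chi_D(p_r)\Delta_r$.

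The one point needing care is uniformity. The $o(1)$ errors must be uniform over the finitely many pairs $(e,j)$, which holds because Proposition~\ref{prop:endpoint} is applied to finitely many fixed $(e,n)$. The factor $p_r^{1-\ell}$ must then kill everything except the $i=r$ term, which needs the primes to be distinct with $p_r$ the largest; this is built into \eqref{eq:primes}. No other obstacle is expected.
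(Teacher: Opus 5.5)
Your proof is correct and follows the paper's argument essentially step for step. You write the $j$th column as $\chi_D(p_j)p_j^{\ell-1}\one$ plus a convergent vector, expand multilinearly so that terms with two or more copies of $\one$ vanish, and then normalize by $p_r^{1-\ell}$. After that normalization only the $j=r$ term survives, and it tends to $\chi_D(p_r)\Delta_r$.
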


\begin{proof}
Let
\[
 \one:=(1,\ldots,1)^T\in\C^r,
 \quad w_j:=(p_j^2,p_j^4,\ldots,p_j^{2r})^T,
 \quad v_{\ell,j}:=\chi_D(p_j)p_j^{\ell-1}.
\]
By \eqref{eq:prime-asymptotic}, the $j$th column of $M_\ell$ is
\begin{equation}\label{eq:column-form}
 v_{\ell,j}\one+w_j+\varepsilon_{\ell,j},
 \qquad \varepsilon_{\ell,j}\longrightarrow 0
 \quad(1\leq j\leq r).
\end{equation}
Put $z_{\ell,j}:=w_j+\varepsilon_{\ell,j}$.  Multilinearity of the
determinant gives
\begin{align}\label{eq:det-expansion}
 \det M_\ell
 &=\det(z_{\ell,1},\ldots,z_{\ell,r})\notag\\
 &\quad+\sum_{j=1}^rv_{\ell,j}
 \det(z_{\ell,1},\ldots,z_{\ell,j-1},\one,
 z_{\ell,j+1},\ldots,z_{\ell,r}).
\end{align}
There are no terms involving two or more $v_{\ell,j}$, because every such
determinant contains at least two columns equal to $\one$.

Divide \eqref{eq:det-expansion} by $p_r^{\ell-1}$.  The first determinant
remains bounded and hence tends to zero after this normalization.  For
$j<r$,
\[
 \frac{v_{\ell,j}}{p_r^{\ell-1}}
 =\chi_D(p_j)\left(\frac{p_j}{p_r}\right)^{\ell-1}\longrightarrow 0.
\]
The $j=r$ term converges to
\[
 \chi_D(p_r)\det(w_1,\ldots,w_{r-1},\one)
 =\chi_D(p_r)\Delta_r.
\]
This proves \eqref{eq:det-limit}.
\end{proof}

\begin{corollary}\label{cor:non-zero}
    We have \begin{equation}
 \lim_{\substack{\ell\to\infty\\(-1)^\ell D>0}}
 p_r^{1-\ell}\det M_\ell \ne 0. 
\end{equation}
\end{corollary}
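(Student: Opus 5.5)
By Lemma~\ref{lem:determinant}, the limit equals $\chi_D(p_r)\Delta_r$, so it is enough to show that both factors are nonzero. The first is immediate: $p_r\nmid D$ by the choice \eqref{eq:primes}, so $\chi_D(p_r)\in\{\pm1\}$. When $D=1$ it is simply $1$.

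It remains to evaluate $\Delta_r$ and show it is nonzero. I will substitute $x_j:=p_j^2$ for $1\le j\le r-1$. Then the $(i,j)$ entry of the first $r-1$ columns is $x_j^i$ for $i=1,\dots,r$, and the last column is $\one$. Expanding along the last column is awkward, so I will instead use a cleaner reduction.

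\begin{itemize}
\item Put $x_r:=0$. Then $\Delta_r$ is the determinant of the $r\times r$ matrix with columns $(x_j,x_j^2,\dots,x_j^r)^T$ for $j<r$, together with $\one$ as the last column.
\item Append an extra top row $(1,\dots,1,1)$ and an extra first column $(1,0,\dots,0)^T$. This does not change the determinant.
\item Alternatively, read the matrix directly as a generalized Vandermonde determinant.
\end{itemize}

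The simplest concrete route is to observe that the matrix with rows indexed by $i=0,1,\dots,r-1$ and columns $(x_1^i,\dots,x_{r-1}^i,0^i)$, where $0^0=1$, is an honest Vandermonde matrix in $x_1,\dots,x_{r-1},0$. Our matrix is not of this form: its row exponents run over $1,\dots,r$, not $0,\dots,r-1$, and its last column is constant rather than $(0^i)$. So I will handle the last column by subtracting the last row from each of the other rows, or equivalently by expanding along the last column. Expanding gives
\[
\Delta_r=\sum_{i=1}^r(-1)^{i+r}\det\bigl(x_j^{m}\bigr)_{m\ne i,\ j<r}.
\]
Each minor equals $\prod_{j<r}x_j\cdot V(x)\cdot e_{r-i}(x)$, by the standard Schur-function identity for Vandermonde matrices with one exponent deleted. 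Here $V(x)=\prod_{j<j'<r}(x_{j'}-x_j)$ and $e_m$ is the elementary symmetric polynomial in $x_1,\dots,x_{r-1}$. Summing with signs then yields
\[
\Delta_r=\pm\prod_{j<r}x_j\,V(x)\sum_{m=0}^{r-1}(-1)^m e_m(x)=\pm\prod_{j<r}x_j\,V(x)\prod_{j<r}(1-x_j).
\]
The last equality comes from the generating function $\prod_j(1+t x_j)$ evaluated at $t=-1$.

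This gives the closed form $\Delta_r=\pm\prod_{j<r}p_j^2(1-p_j^2)\prod_{j<j'<r}(p_{j'}^2-p_j^2)$. The factor $p_j^2$ is nonzero, the factor $1-p_j^2$ is nonzero because $p_j\ge2$, and the Vandermonde factors are nonzero because the primes are distinct. Hence $\Delta_r\neq0$, and the corollary follows.

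The main obstacle is getting the bookkeeping of the sign and the elementary symmetric identity right. To guard against error, I would double-check the closed form as a polynomial identity. Both sides are alternating in $x_1,\dots,x_{r-1}$ and vanish when some $x_j=1$, since then column $j$ equals $\one$; they also vanish when some $x_j=0$. Comparing degrees and one leading coefficient should then pin down the equality.
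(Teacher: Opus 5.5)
Your proof is correct and has the same skeleton as the paper's. The limit is $\chi_D(p_r)\Delta_r$ by Lemma~\ref{lem:determinant}, $\chi_D(p_r)=\pm1$, and $\Delta_r=\prod_{j<r}p_j^2(1-p_j^2)\prod_{j<j'<r}(p_{j'}^2-p_j^2)$ is visibly nonzero.

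The only difference is how you evaluate $\Delta_r$. The paper pulls $p_j^2$ out of each of the first $r-1$ columns. What remains has rows $x^0,\dots,x^{r-1}$ evaluated at the nodes $p_1^2,\dots,p_{r-1}^2$ and $1$, because the constant column $\one$ is just the Vandermonde column at the node $1$. The ordinary Vandermonde formula then gives the closed form, with sign $+$, in one line.

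You instead expand along $\one$, use the deleted-exponent identity $\det(x_j^m)_{m\neq i,\,j<r}=\prod_j x_j\,V(x)\,e_{r-i}(x)$, and sum $\sum_m(-1)^m e_m=\prod_j(1-x_j)$. This is valid. The cofactor sign $(-1)^{i+r}=(-1)^{r-i}$ makes the overall sign exactly $+$, so your $\pm$ can be dropped. But it replaces a one-line observation with a Schur-function identity and sign bookkeeping.

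Your remark that the last column is ``constant rather than $(0^i)$'' misidentifies the relevant node: it is $1$, not $0$. Your own second bullet already finishes the job. The bordered $(r+1)\times(r+1)$ matrix is exactly the Vandermonde matrix at the nodes $0,p_1^2,\dots,p_{r-1}^2,1$, and its determinant is the paper's formula.
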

\begin{proof}
First note that by the Vandermonde determinant 
    \[
\Delta_r =\left(\prod_{j=1}^{r-1}p_j^2\right)
 \left(\prod_{1\leq i<j\leq r-1}(p_j^2-p_i^2)\right)
 \left(\prod_{i=1}^{r-1}(1-p_i^2)\right).
    \]
This identity immediately implies that 
$\Delta_r\neq 0$.
Moreover, $p_r\nmid D$ implies $\chi_D(p_r)=\pm1$, so the limit in
\eqref{eq:det-limit} is nonzero.
\end{proof}

\begin{remark}
    Once the primes $p_1, \dots, p_r$ are chosen, we can effectively find a large enough $\ell$ for which $\det M_\ell \ne 0$. We can see this as follows. In the sequal all norms
below are Euclidean.  Put
\[
 \eta_\ell:=\max_{1\leq j\leq r}\|\varepsilon_{\ell,j}\|,
 \qquad W:=r p_r^{2r},\qquad Q_j:=8p_j^2|D|,
\]
and, with $M_j=p_j|D|$, define
\begin{align}\label{eq:rational-error}
 T_{e,j}(\ell)&:=\nu(D)C_eM_j^{2e+1}|D|
       \frac{Q_j^{\ell-2e}}{(\ell-2e-1)!},\notag\\
 h_\ell&:=r\max_{1\leq e,j\leq r}T_{e,j}(\ell).
\end{align}
The bound \eqref{eq:effective-error}, together with
$\zeta(2)/2<1$, $\sqrt{|D|}\leq |D|$, and $2\pi<8$, gives
$\eta_\ell\leq h_\ell$.  Also $\|w_j\|\leq W$ and
$\|\one\|=\sqrt r\leq r$.  Every quantity in
\eqref{eq:rational-error} is rational and can be evaluated exactly.
Moreover,
\begin{equation}\label{eq:error-ratio}
 \frac{T_{e,j}(\ell+2)}{T_{e,j}(\ell)}
 =\frac{Q_j^2}{(\ell-2e)(\ell-2e+1)}.
\end{equation}
Thus $h_\ell$ is decreasing along the admissible parity class as soon as
$\ell\geq 2r+Q_r$, and $h_\ell\to0$.

For columns $c_1,\ldots,c_r\in\C^r$, Hadamard's inequality states that
$|\det(c_1,\ldots,c_r)|\leq\prod_i\|c_i\|$.  Applying this inequality to
\eqref{eq:det-expansion} gives
\begin{equation}\label{eq:effective-det-bound}
\begin{aligned}
 \left|p_r^{1-\ell}\det M_\ell-\chi_D(p_r)\Delta_r\right|
 &\leq p_r^{1-\ell}(W+h_\ell)^r\\
 &\quad+r\sum_{j=1}^{r-1}
 \left(\frac{p_j}{p_r}\right)^{\ell-1}
 (W+h_\ell)^{r-1}\\
 &\quad+r(r-1)h_\ell(W+h_\ell)^{r-2}
 =:\mathcal R_\ell.
\end{aligned}
\end{equation}
The last summand is omitted when $r=1$.  To obtain it for $r\geq2$,
replace the columns $z_{\ell,1},\ldots,z_{\ell,r-1}$ in the $j=r$
determinant by $w_1,\ldots,w_{r-1}$ one at a time.  Each of the resulting
$r-1$ determinants has one error column of norm at most $h_\ell$, the
column $\one$ of norm at most $r$, and $r-2$ columns of norm at most
$W+h_\ell$.

Let $\ell_1$ be the least admissible integer at least $2r+Q_r$.
Then $\ell_1\geq2r+4$, and \eqref{eq:error-ratio} shows that
$\mathcal R_\ell$ is decreasing for admissible $\ell\geq\ell_1$.
It tends to zero, including when $r=1$.  Since $\mathcal R_\ell$ is explicit and $|\Delta_r|$ is a positive integer, we just need to find $\ell$ such that 
\[
 \mathcal R_\ell<|\Delta_r|/2,
 \qquad \ell=\ell_1,\ell_1+2,\ldots.
\]
Choose $\ell_0(D,r)$ to be the first
successful value.  Monotonicity guarantees the same inequality at every
subsequent admissible value, proving the effectivity assertion.
\end{remark}

\section{Proof of the main theorem}
\label{sec:spectral}

We now present the proof of Theorem~\ref{thm:main} and Corollary~\ref{cor:untwisted}. 

We first observe that for fixed $D$ and $r$, the forms $\mathcal F_{\ell,e}^{(D)}$, $1\leq e\leq r$,
are linearly independent for every sufficiently large admissible $\ell$. In fact, 
Corollary~\ref{cor:non-zero} shows that $\det M_\ell\neq 0$ for every
sufficiently large $\ell$ satisfying $(-1)^\ell D>0$.  The matrix of the
$p_1,\ldots,p_r$ Fourier coefficients of the forms
$\mathcal F_{\ell,e}^{(D)}$ is obtained from $M_\ell$ by multiplying its $e$th row
by the nonzero scalar
\[
 \binom{\ell-1}{2e}L_D(1-\ell+2e).
\]
It is therefore nonsingular.  A linear relation among the $r$ cusp forms
would give a linear relation among these $r$ coefficient rows, which is
impossible.

Let
$\mathcal B_{2\ell}$ be the normalized Hecke eigenbasis of
$S_{2\ell}(\SLtwoZ)$.  Distinct members of $\mathcal B_{2\ell}$ are
orthogonal for the Petersson inner product.

Fix $1\leq e\leq r$ and put $k=\ell-2e$.
Proposition~\ref{prop:Petersson-formula} gives
\begin{equation}\label{eq:spectral-factorization}
 \langle \mathcal F_{\ell,e}^{(D)},f\rangle
 =C_{D,\ell,e}L(f,2\ell-2e-1)L(f\otimes\chi_D,\ell),
\end{equation}
where $C_{D,\ell,e}\neq 0$.  Indeed, all gamma factors are nonzero,
$L_D(k)\neq 0$ by its Euler product, and $L_D(1-k)\neq 0$ by
\eqref{eq:Dirichlet-FE-value}.

In particular,
\begin{equation}\label{eq:spectral-vanishing}
 L(f\otimes\chi_D,\ell)=0
 \quad\Longrightarrow\quad
 \langle \mathcal F_{\ell,e}^{(D)},f\rangle=0.
\end{equation}

Let
\begin{equation}\label{eq:V-space}
 V_{\ell,D}:=\Span_{\C}\{f\in\mathcal B_{2\ell}:
 L(f\otimes\chi_D,\ell)\neq 0\}.
\end{equation}
Expanding $\mathcal F_{\ell,e}^{(D)}$ in the orthogonal basis
$\mathcal B_{2\ell}$ and using
\eqref{eq:spectral-vanishing}, we find that
\begin{equation}\label{eq:F-in-V}
 \mathcal F_{\ell,e}^{(D)}\in V_{\ell,D}\qquad(1\leq e\leq r).
\end{equation}
The linear independence of the forms $\mathcal F_{\ell,e}^{(D)}$, $1\leq e\leq r$ established above shows that 
$\dim_{\C}V_{\ell,D}\geq r$.  Because the normalized eigenforms are
linearly independent, $\dim V_{\ell,D}$ is exactly the number of
eigenforms with nonzero twisted central value.  This proves
\eqref{eq:nonvanishing-count}, and hence Theorem~\ref{thm:main}.

For $D=1$, equation \eqref{eq:GkD} becomes
\[
 G_{k,1}(z)=\frac{\zeta(1-k)}2+
 \sum_{n\geq 1}\sigma_{k-1}(n)q^n.
\]
Let $B_k$ denote the $k$th Bernoulli number.  If
\[
 E_k(z)=1-\frac{2k}{B_k}\sum_{n\geq 1}\sigma_{k-1}(n)q^n
\]
is the normalized level-one Eisenstein series, then
$\zeta(1-k)=-B_k/k$ gives
\begin{equation}\label{eq:G-E}
 G_{k,1}=\frac{\zeta(1-k)}2E_k.
\end{equation}
The scalar in \eqref{eq:G-E} is nonzero for even $k\geq 4$.  Since the
trace from level one to itself is the identity and the bracket is bilinear,
\begin{equation}\label{eq:F-bracket-E}
 \mathcal F_{\ell,e}^{(1)}=
 \left(\frac{\zeta(1-\ell+2e)}2\right)^2
 [E_{\ell-2e},E_{\ell-2e}]_{2e}.
\end{equation}
Taking $K=2\ell$ in Theorem~\ref{thm:main} proves
Corollary~\ref{cor:untwisted}.


\begin{thebibliography}{99}

\bibitem{Cohen}
H.~Cohen,
\emph{Sums involving the values at negative integers of $L$-functions of
quadratic characters},
Math. Ann. \textbf{217} (1975), 271--285.

\bibitem{Davenport}
H.~Davenport,
\emph{Multiplicative Number Theory}, 3rd ed., revised by H.~L. Montgomery,
Graduate Texts in Mathematics, vol.~74, Springer-Verlag, New York, 2000.

\bibitem{KayathEtAl}
J.~Kayath, C.~Lane, B.~Neifeld, T.~Ni, and H.~Xue,
\emph{Subspaces spanned by eigenforms with nonvanishing twisted central
$L$-values},
Canad. J. Math., published online 3 October 2025, 1--34,
\href{https://doi.org/10.4153/S0008414X25101697}
{doi:10.4153/S0008414X25101697}.


\bibitem{LanphierTaklooBighash}
D.~Lanphier and R.~Takloo-Bighash,
\emph{On Rankin--Cohen brackets of eigenforms},
J. Ramanujan Math. Soc. \textbf{19} (2004), no.~4, 253--259.

\bibitem{Rankin}
R.~A. Rankin,
\emph{The construction of automorphic forms from the derivatives of a
given form},
J. Indian Math. Soc. \textbf{20} (1956), 103--116.

\bibitem{ZagierDifferential}
D.~Zagier,
\emph{Modular forms and differential operators},
Proc. Indian Acad. Sci. Math. Sci. \textbf{104} (1994), 57--75.

\end{thebibliography}
\end{document}